\documentclass[a4paper]{amsart}
\usepackage[latin1]{inputenc}
\usepackage{amssymb}
\usepackage{amsmath}
\usepackage{mathrsfs}
\usepackage{amsthm}
\usepackage{amsfonts}
\usepackage{textcomp}
\usepackage{graphicx}
\usepackage[pdftex]{color}
\usepackage{paralist}
\usepackage[shortlabels]{enumitem}
\usepackage{hyperref}
\usepackage{comment}
\usepackage{fvextra}
\usepackage[arrow, matrix, curve]{xy}
\usepackage{tikz}
\DefineVerbatimEnvironment{MagmaCode}{Verbatim}{%
  fontsize=\scriptsize,%
  baselinestretch=0.88,%
  breaklines=true,%
  breakanywhere=true,%
  xleftmargin=0.5em,%
  xrightmargin=0.5em%
}

\newtheorem*{corollary*}{Corollary}
\newtheorem*{conjecture*}{Conjecture}
\newtheorem*{example*}{Example}
\newtheorem*{theorem*}{Theorem}
\newtheorem*{proposition*}{Proposition}

\newtheorem{theorem}{Theorem}[section]

\newtheorem*{claim*}{Claim}
\newtheorem*{conjecture}{Conjecture}
\newtheorem*{question}{Question}

\theoremstyle{definition}

\theoremstyle{remark}

\numberwithin{equation}{section}

\makeatletter
\renewcommand*\env@matrix[1][\arraystretch]{%
  \edef\arraystretch{#1}%
  \hskip -\arraycolsep
  \let\@ifnextchar\new@ifnextchar
  \array{*\c@MaxMatrixCols c}}
\makeatother

\renewcommand{\mod}{\operatorname{mod}}

\newcommand{\Ext}{\operatorname{Ext}}

\newcommand{\End}{\operatorname{End}}

\newcommand{\Hom}{\operatorname{Hom}}

\newcommand{\Tor}{\operatorname{Tor}}

\begin{document}

\title{Tor and Ext vanishing results for commutative Artinian rings}
\date{\today}

\subjclass[2020]{Primary 13D07, 13E10}

\keywords{Tachikawa conjectures, Tor vanishing, Ext vanishing, Gorenstein rings}

\author{Bernhard B\"ohmler}
\address[B.~B\"ohmler]{Leibniz Universit\"at Hannover, Institut f\"ur Algebra, Zahlentheorie und Diskrete Mathematik, Welfengarten 1, 30167 Hannover, Germany}
\email{boehmler@math.uni-hannover.de}

\author{Ren\'{e} Marczinzik}
\address[R.~Marczinzik]{Mathematical Institute of the University of Bonn, Endenicher Allee 60, 53115 Bonn, Germany}
\email{marczire@math.uni-bonn.de}

\begin{abstract}
We give a negative answer to a question of Avramov, Buchweitz and \c{S}ega by constructing a commutative local finite-dimensional non-Gorenstein algebra $R$ with $\Ext_R^1(D(R),R)=0$; this question is related to the first Tachikawa conjecture. We also give a counterexample to a conjecture of Huneke, \c{S}ega and Vraciu on Tor vanishing over commutative finite-dimensional algebras. Finally, we construct a finite-dimensional commutative local self-injective algebra $R$ over $\mathbb{F}_2$ and an indecomposable non-projective $R$-module $M$ such that $\Ext_R^1(M,M)=\Ext_R^2(M,M)=0$, related to the second Tachikawa conjecture and answering a question of Dao.
\end{abstract}

\maketitle

\section*{Introduction}
In the book \cite{Ta}, Tachikawa formulated two homological conjectures for finite-dimensional, not necessarily commutative, $K$-algebras ($K$ a field) that are closely related to the famous Nakayama conjecture.
The Tachikawa conjectures remain open in general and are known only in special situations. They follow from the celebrated finitistic dimension conjecture; see, for example, the survey \cite{Y}.
The first Tachikawa conjecture states that, for a finite-dimensional $K$-algebra $A$ with dual $D(A)=\Hom_K(A,K)$, the vanishing $\Ext_A^i(D(A),A)=0$ for all $i\geq1$ implies that $A$ is self-injective. The second Tachikawa conjecture states that, if $A$ is self-injective and $M$ is an indecomposable $A$-module satisfying $\Ext_A^i(M,M)=0$ for all $i\geq1$, then $M$ is projective.
Both conjectures remain open even for finite-dimensional commutative algebras, but several commutative cases are known. For a proof of the conjectures over commutative local Artinian algebras with radical cube zero, see Asashiba and Hoshino \cite{A,H,H2}. For broader background on the noncommutative conjectures, see \cite{CFX,Sa}.

In positive Krull dimension, the natural commutative analogue of the first conjecture replaces the vector-space dual by a canonical module in the Cohen--Macaulay case, or more generally by a dualizing complex. Avramov, Buchweitz and \c{S}ega formulated this version and proved it in several classes of commutative Noetherian local rings \cite{ABS}. Further work treats Cohen--Macaulay tensor products \cite{Jor}, generically Gorenstein Cohen--Macaulay rings \cite{GT}, and positively graded rings over fields of characteristic different from two \cite{LMSW}. Related Ext-vanishing criteria for the Gorenstein property were obtained by Hanes and Huneke \cite{HH}; Lyle and Monta\~no later established further criteria based on consecutive blocks of Ext vanishing \cite{LM}. A nearly Gorenstein variant, formulated via the trace of the canonical module, was posed by Dao, Kobayashi and Takahashi \cite{DKT}; more recent annihilator-theoretic generalizations and results in Krull dimensions one, two and higher appear in \cite{Es}.

The resulting dualizing-complex conjecture is the following:
\begin{conjecture}
Let $R$ be a commutative Noetherian local ring with dualizing complex $D^R$. If $\Ext_R^i(D^R,R)=0$ for all $i \geq 1$, then $R$ is Gorenstein.
\end{conjecture}
We refer to the introduction of \cite{ABS} for this conjecture and the precise definition of $D^R$.
In the introduction of \cite{ABS}, one can find the following question:
\begin{question}
Let $R$ be a commutative Noetherian local ring with dualizing complex $D^R$. If $\Ext_R^i(D^R,R)=0$ for $\dim R+1$ consecutive positive values of $i$, must $R$ be Gorenstein?
\end{question}
Note that, when $R$ is a commutative local finite-dimensional $K$-algebra, it is Cohen--Macaulay and $D^R=\Hom_K(R,K)=D(R)$.
We give a negative answer as follows:
\begin{theorem}
There exists a finite-dimensional commutative local non-Gorenstein $\mathbb{Q}$-algebra $R$ such that $\Ext_R^1(D(R),R)=0$.
\end{theorem}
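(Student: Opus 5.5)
The theorem asks only for existence, so the plan is to exhibit an explicit local Artinian $\mathbb{Q}$-algebra $R=\mathbb{Q}[x_1,\dots,x_n]/I$, with $I$ an $\mathfrak m$-primary ideal, and to verify $\Ext^1_R(D(R),R)=0$ by a finite linear-algebra computation. The results of Asashiba and Hoshino \cite{A,H,H2} show that radical cube zero rings cannot work. I would therefore search among rings with $\mathfrak m^4=0$ or $\mathfrak m^5=0$ and small embedding dimension ($n=2$ or $3$). A natural first family is rings with socle of dimension $2$, since these are closest to Gorenstein: for instance quotients of a Gorenstein ring by a generic socle-type element, or connected sums and fibre products of Gorenstein rings. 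The search would be carried out in a computer algebra system, looping over random or monomial-plus-generic-binomial ideals with prescribed Hilbert function.

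For a candidate $R$, the verification has three steps.
\begin{enumerate}
\item Show $R$ is not Gorenstein by checking that the socle $(0:_R\mathfrak m)$ has $\mathbb{Q}$-dimension at least $2$.
\item Compute a presentation of $\omega=D(R)$. It can be realised as $\Hom_{\mathbb{Q}}(R,\mathbb{Q})$ with the dual action. Alternatively, by Macaulay inverse systems, $\omega$ is the $R$-submodule of the divided power ring generated by the socle-dual polynomials, which gives a presentation $R^{t}\xrightarrow{\ \varphi\ } R^{s}\to\omega\to0$ with $s=\dim_{\mathbb{Q}}\operatorname{soc}R$.
\item Extend to a partial free resolution $R^{u}\xrightarrow{\psi}R^{t}\xrightarrow{\varphi}R^{s}$ and compute
\[
\Ext^1_R(\omega,R)=\ker(\psi^{*})/\operatorname{im}(\varphi^{*})
\]
after applying $\Hom_R(-,R)$. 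Since everything is a finite-dimensional $\mathbb{Q}$-vector space, vanishing reduces to the rank equality $\dim\ker\psi^{*}=\dim\operatorname{im}\varphi^{*}$.
\end{enumerate}

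For a self-contained writeup I would avoid the resolution and use a dimension count instead. Since $R$ is Artinian, $\Ext^1_R(\omega,R)\cong D\bigl(\Tor_1^R(\omega,D(R))\bigr)$-type dualities are available; more simply, one can work with the short exact sequence $0\to\Omega\omega\to R^s\to\omega\to0$. Applying $\Hom_R(-,R)$, the vanishing of $\Ext^1$ is equivalent to
\[
\dim_{\mathbb{Q}}\Hom_R(\Omega\omega,R)=\dim_{\mathbb{Q}}\Hom_R(R^s,R)-\dim_{\mathbb{Q}}\Hom_R(\omega,R)=s\dim R-\dim\Hom_R(\omega,R).
\]
Each of the three Hom spaces is computable as the solution space of a linear system over $\mathbb{Q}$. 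This makes the certificate checkable by hand, or reproducibly in Magma, which I would include as a code listing.

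The main obstacle is finding the example at all. Heuristically, $\Ext^1_R(\omega,R)=0$ with $R$ non-Gorenstein is rare, because $\Hom_R(\omega,R)$ tends to be small, in fact contained in $\mathfrak m\omega^{*}$-type pieces. The dimension equation above is then typically violated by extensions coming from the socle. So I expect the search to require embedding dimension at least $3$, Loewy length around $4$ or $5$, and generic rational coefficients; special monomial ideals tend to fail. My first attempt would be a random search over ideals $I$ with fixed small Hilbert function such as $(1,3,3,2)$ or $(1,3,4,3,1)$ minus a socle element. At each step I would test only $\Ext^1$ and record any hit, then simplify its coefficients to a clean presentation for the statement.
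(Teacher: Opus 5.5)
\textbf{Verdict: genuine gap.} Your proposal does not prove the statement. It is a plan for a computer search, and it never exhibits a ring. The theorem is a pure existence claim, so its entire content is a specific presentation $R=\mathbb{Q}[x_1,\dots,x_n]/I$ together with a certificate of three things:
\begin{enumerate}
\item $R$ is local and finite-dimensional;
\item $\dim_{\mathbb{Q}}\operatorname{soc}R\geq 2$;
\item $\Ext^1_R(D(R),R)=0$.
\end{enumerate}
Your verification framework is correct, and it is the one the paper uses. Take the projective cover $0\to\Omega\to R^s\to D(R)\to 0$ with $s=\dim_{\mathbb{Q}}\operatorname{soc}R$. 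Then $\dim\Ext^1_R(D(R),R)=\dim\Hom_R(\Omega,R)-s\dim R+\dim\Hom_R(D(R),R)$, and each term is a finite linear-algebra computation. But without a concrete ideal $I$, none of your three steps is ever carried out, so nothing has been established.

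The missing idea is the example itself, and your heuristics for locating it are unsupported. The paper's ring has:
\begin{itemize}
\item $18$ generators and $144$ quadratic relations;
\item $\dim_{\mathbb{Q}}R=48$ and radical layers $(1,18,27,2)$;
\item a two-dimensional socle.
\end{itemize}
It was found by a computer-assisted search. Your guesses of socle dimension $2$ and Loewy length about $4$ happen to match it. Its embedding dimension, however, is far outside the region you propose to search (embedding dimension $2$ or $3$, Hilbert functions such as $(1,3,3,2)$ or $(1,3,4,3,1)$). You give no reason why an example should exist there, so the search as planned might never terminate. To turn the proposal into a proof, you must fix a concrete ideal $I$ and run the checks on it. The paper does exactly this with its explicit relations and Magma script, independently confirmed in QPA.
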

This example was found with the assistance of ChatGPT and is rather complicated: it has 18 generators and many relations. We give its explicit presentation in Section~2, together with Magma code verifying the theorem.

Next we consider the second Tachikawa conjecture for commutative local finite-dimensional Gorenstein algebras. For commutative Artinian local rings, being Gorenstein is equivalent to being self-injective.
In \cite[Question 9.1.4]{D}, Dao posed the following question:
\begin{question}
Let $R$ be a commutative local finite-dimensional Gorenstein algebra with a finitely generated module $M$. If $\Ext_R^1(M,M)=0$, is $M$ free?
\end{question}
Note that over local finite dimensional algebras, being free is equivalent to being projective for finitely generated modules.
For commutative Artinian Gorenstein rings, the second Tachikawa conjecture is equivalent to the commutative Auslander--Reiten conjecture, which is discussed for example, in \cite{S}. In positive Krull dimension, the latter asks whether a finitely generated module $M$ over a commutative Noetherian local ring $R$ satisfying
\[
 \Ext_R^i(M,M\oplus R)=0\qquad\text{for all }i\geq1
\]
must be free. Auslander, Ding and Solberg proved the conjecture for complete intersections \cite{ADS}, and Huneke and Leuschke proved it for locally excellent Cohen--Macaulay normal rings containing the rational numbers \cite{HL}. It was subsequently established for Cohen--Macaulay normal rings by Kimura, Otake and Takahashi \cite{KOT}, and more recently for every normal ring by Kimura \cite{Ki}. Results for Gorenstein rings and extensions to Cohen--Macaulay local rings with canonical modules appear in \cite{Ar,OY,ACST}; the rank-one maximal Cohen--Macaulay case is treated in \cite{GT}. Further numerical cases and generalizations based on Ext annihilators are developed in \cite{LM,Es}.

Our next main result gives a negative answer to Dao's question:
\begin{theorem}
There is a finite-dimensional commutative local Gorenstein $\mathbb{F}_2$-algebra $R$ admitting an indecomposable non-projective $R$-module $M$ such that $\Ext_R^i(M,M)=0$ for $i=1,2$.
\end{theorem}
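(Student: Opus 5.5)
The plan is to exhibit an explicit pair $(R,M)$ and verify the required properties by finite linear algebra over $\mathbb{F}_2$, carried out with Magma, in the same way as for the first theorem. Since $R$ is a finite-dimensional local algebra, every relevant object is a finite-dimensional $\mathbb{F}_2$-vector space with an explicit matrix action. Hence ``Gorenstein'', ``indecomposable'', ``non-projective'' and the vanishing of $\Ext^1,\Ext^2$ all reduce to rank computations.

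\textbf{Choice of $R$ and $M$.} Take $R=\mathbb{F}_2[x_1,\dots,x_n]/I$ for a small number of variables, with $I$ an $\mathfrak m$-primary ideal. The cleanest way to force Gorensteinness is Macaulay inverse systems: choose a polynomial $F$ in the divided power algebra and let $I=\operatorname{Ann}(F)$, so that $R$ has one-dimensional socle. Characteristic $2$ is a natural choice because the known positive results tend to use characteristic $\neq2$ or complete-intersection techniques, so $R$ should not be a complete intersection. Radical cube zero is excluded by Hoshino's theorem, so Loewy length must be at least $4$.

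For $M$, a natural first candidate is a cyclic module $M=R/J$, or the cokernel of a small matrix over $R$. It must have no free summand and be non-free. Finding such an $M$ (for instance by a randomized search over ideals $J$ generated by a few elements of $\mathfrak m$) is the main obstacle. The cancellation $\Ext^1(M,M)=\Ext^2(M,M)=0$ is rare. One heuristic is to pick $M$ rigid in the deformation sense, so that $\Ext^1=0$, and then test $\Ext^2$. Another is to use $\Ext^i_R(M,M)\cong\Ext^{i+1}_R(M,\Omega^{-1}M)$-type shifts available over self-injective rings: search among syzygies $\Omega^kM$ of a rigid module for one where two consecutive groups vanish.

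\textbf{Verification steps.}
\begin{enumerate}[(1)]
\item Check that $R$ is local and finite-dimensional (a Gröbner basis gives an $\mathbb{F}_2$-basis), and that $\dim_{\mathbb{F}_2}\operatorname{soc}R=1$. This is equivalent to $R$ being Gorenstein, hence self-injective.
\item Show $M$ is indecomposable by computing that $\End_R(M)$ is local. Since $\mathbb{F}_2$ is finite, it suffices to check that $\End_R(M)/\operatorname{rad}\End_R(M)$ is a field; Magma's meataxe-type routines can do this.
\item Non-projectivity: because $R$ is local, check $\dim M$ is not a multiple of $\dim R$, or check that the minimal number of generators times $\dim R$ differs from $\dim M$.
\item Compute a minimal projective resolution $P_2\to P_1\to P_0\to M\to0$ explicitly. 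Apply $\Hom_R(-,M)$ and verify by rank computations that the cohomology in degrees $1$ and $2$ vanishes. Equivalently, compute $\dim\Hom_R(\Omega^iM,M)$ and compare it with the dimension of the maps factoring through projectives, using $\Ext^i(M,M)\cong\underline{\Hom}(\Omega^iM,M)$.
\end{enumerate}

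The writeup then presents $R$ and $M$ explicitly, together with the Magma code performing (1)–(4).
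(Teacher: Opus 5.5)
\textbf{There is a genuine gap: you never give the example.} Your steps (1)--(4) are sound. They are essentially the checks the paper runs: one-dimensional socle, Magma's decomposition routine, non-projectivity from $\dim_K R\nmid\dim_K M$, and Hom-dimension alternating sums along a minimal resolution. But the statement is purely existential, so its proof \emph{is} the witness $(R,M)$. You describe where to look and concede that finding $M$ is ``the main obstacle'', without resolving it. As written nothing has been proved, and nothing in the plan guarantees the search succeeds.

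The paper supplies the witness explicitly:
\begin{itemize}
\item $R=\mathbb{F}_2[x_1,\ldots,x_{14}]/I$ with $I=(x_ix_j+x_1\ell_{i,j}\mid 2\le i\le j\le 14)$ for explicit linear forms $\ell_{i,j}$. Then $\dim_K R=30$, the Hilbert function is $(1,14,14,1)$, and the socle is one-dimensional.
\item $M=\operatorname{coker}(d_1)$ for an explicit $2\times 3$ matrix of linear forms, with $\dim_K M=31$.
\item The computed values $\dim\End_R(M)=47$, $\dim\Hom_R(\Omega_1,M)=15$ and $\dim\Hom_R(\Omega_2,M)=78$ give $\dim\Ext^1_R(M,M)=15-62+47=0$ and $\dim\Ext^2_R(M,M)=78-93+15=0$.
\end{itemize}
Note that this needs embedding dimension $14$, not a ``small number of variables''.

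\textbf{Two of your search heuristics cannot succeed.}

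First, cyclic modules never work. Let $0\neq J\subsetneq R$ and apply $\Hom_R(-,R/J)$ to $0\to J\to R\to R/J\to 0$. The restriction map $\Hom_R(R,R/J)\to\Hom_R(J,R/J)$ is zero, because $rJ\subseteq J$. Hence $\Ext^1_R(R/J,R/J)\cong\Hom_R(J,R/J)$. This is nonzero: compose $J\to J/\mathfrak mJ\to K$ with an embedding of the residue field $K$ into the socle of $R/J$. So a randomized search over ideals $J$ is doomed, and $M$ needs at least two generators, as the paper's $M$ has.

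Second, searching among syzygies of a rigid module gains nothing. Over a self-injective ring, $\Omega$ is an autoequivalence of the stable category. Therefore
\[
\Ext^i_R(\Omega^kM,\Omega^kM)\cong\underline{\Hom}_R(\Omega^{i+k}M,\Omega^kM)\cong\underline{\Hom}_R(\Omega^iM,M)\cong\Ext^i_R(M,M)
\]
for all $i\geq1$ and $k\geq0$. Every syzygy has exactly the same pattern of vanishing self-extensions as $M$.
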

We display the concrete algebra $R$ and module $M$, which were also found with the assistance of ChatGPT, in Section~3, together with Magma code verifying the theorem.
This also resolves a problem in the literature that had remained open for nearly 20 years.
In \cite[Theorem~4.6]{LH} a result was presented that would imply that if $R$ is a commutative local finite-dimensional Gorenstein algebra and $M$ is an indecomposable $R$-module satisfying $\Ext_R^i(M,M)=0$ for $i=1,2$, then $M$ is projective. This would imply the second Tachikawa conjecture in the commutative case. Doubts about the argument have been expressed for a long time, but no concrete counterexample was previously known; see, for example, the footnote on the first page of \cite{S}. The theorem above supplies such a counterexample and shows that the argument in \cite[Theorem~4.6]{LH} cannot be correct.
We remark that our previous two theorems also answer questions raised in \cite[Questions 2.4 and 2.5]{MO}.

The last homological problem that we look at in this article is the following conjecture by Huneke, \c{S}ega and Vraciu in \cite{HSV}:
\begin{conjecture}
Let $R$ be a commutative Artinian local ring with maximal ideal $\mathfrak{m}$. Assume that there are two nonzero finitely generated $R$-modules $M$ and $N$ with $\mathfrak{m}^2M=0$ and $\mathfrak{m}^2N=0$. If $\Tor_i^R(M,N)=0$ for all $i \geq 1$, then $\mathfrak{m}^3=0$.
\end{conjecture}

The next theorem gives a counterexample to this conjecture:
\begin{theorem}
Let $K$ be a field, let $R=K[x,y]/(xy,x^2-y^3)$, set $\mathfrak m=(x,y)R$, and let $M=R/(y)$ and $N=R/(x-y)$. Then $\mathfrak{m}^2M=0$, $\mathfrak{m}^2N=0$, and $\Tor_i^R(M,N)=0$ for all $i\geq1$, whereas $\mathfrak{m}^3\neq0$.
\end{theorem}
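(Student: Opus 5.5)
The plan is to make everything explicit, since $R$ is a small monomial-like algebra. First compute a $K$-basis of $R$. From $xy=0$ and $x^2=y^3$ we get $y^4=y\cdot y^3=yx^2=x(xy)=0$ and $x^3=x\cdot y^3=(xy)y^2=0$. Hence $R$ is spanned by $1,x,y,y^2,y^3$ (with $x^2=y^3$). I will check that these five monomials are linearly independent, for instance by a Gröbner basis with respect to a suitable order, or by exhibiting a $5$-dimensional quotient of $K[x,y]$ in which the relations hold. Then $\mathfrak m^2$ is spanned by $y^2,y^3$, and $\mathfrak m^3$ is spanned by $y^3\neq 0$. This gives $\mathfrak m^3\neq0$.

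Next, identify the modules. In $M=R/(y)$ we have $x^2=y^3=0$, so $M\cong K[x]/(x^2)$ and $\mathfrak m^2M=0$. In $N=R/(x-y)$ we set $x=y$. Then $xy=0$ becomes $y^2=0$, and $x^2-y^3$ becomes $y^2-y^3$, which is already zero. So $N\cong K[y]/(y^2)$, on which $x$ acts as $y$, and $\mathfrak m^2N=0$.

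For the Tor computation, I will build a periodic free resolution of $M$ from annihilators.
\begin{itemize}
\item Write $r=a+bx+cy+dy^2+ey^3$. Then $ry=ay+cy^2+dy^3$, so $\operatorname{ann}(y)=(x,y^3)=(x)$, using $y^3=x^2$.
\item Similarly, $rx=ax+by^3$, so $\operatorname{ann}(x)=(y,y^2,y^3)=(y)$.
\end{itemize}
Hence
\[
\cdots\xrightarrow{\;x\;}R\xrightarrow{\;y\;}R\xrightarrow{\;x\;}R\xrightarrow{\;y\;}R\to M\to0
\]
is exact, alternating multiplication by $y$ and by $x$.

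Tensoring with $N$ gives the complex $\cdots\to N\xrightarrow{y}N\xrightarrow{y}N\xrightarrow{y}N$, since $x$ acts as $y$ on $N$. On $K[y]/(y^2)$ both the kernel and the image of multiplication by $y$ equal $(y)$. The complex is therefore exact in every positive degree, so $\Tor_i^R(M,N)=0$ for all $i\geq1$.

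The only step needing genuine care is the first one: confirming that $\dim_K R=5$, i.e.\ that no hidden relation kills $y^3$. Everything else follows from it. I would first try computing a Gröbner basis of $(xy,x^2-y^3)$ in degree-lex order with $x>y$. The S-polynomial reductions should yield exactly the extra elements $y^4$ and $x^3$, with standard monomials $1,x,y,y^2,y^3$.
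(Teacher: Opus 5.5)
Your proposal is correct and follows essentially the same route as the paper (explicit basis $1,x,y,y^2,y^3$, the identities $(0:_R y)=(x)$ and $(0:_R x)=(y)$, the $2$-periodic resolution of $M$, and tensoring with $N\cong K[y]/(y^2)$, on which $x$ and $y$ act identically); the only difference is that the paper checks $\dim_K R=5$ by dividing by the $X$-monic polynomial $X^2-Y^3$, which in effect is a lex Gr\"obner basis computation. One small correction to your last paragraph: in degree-lex order the leading term of $x^2-y^3$ is $y^3$, so the Gr\"obner basis is $\{xy,\,x^2-y^3,\,x^3\}$ with standard monomials $1,x,x^2,y,y^2$, whereas lex with $x>y$ gives $\{xy,\,x^2-y^3,\,y^4\}$ with standard monomials $1,x,y,y^2,y^3$; either way $\dim_K R=5$ and your argument goes through.
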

The algebra $R$ was also found with the assistance of ChatGPT.
The algebra $R$ is five-dimensional, and in this case we give a direct verification in Section~1.

For the computer verifications, the reader may copy and paste the relevant code into the Magma online calculator if the commercial version of Magma is unavailable:
\url{https://magma.maths.usyd.edu.au/calc/}.
We assume the reader is familiar with commutative algebra and refer for example to the textbook \cite{E}.
\section{A counterexample to the conjecture of Huneke, \c{S}ega and Vraciu}

In this section we prove the following theorem:
\begin{theorem}
Let $K$ be a field and set
\[
 R=K[x,y]/(xy,x^2-y^3),\qquad
 M=R/(y),\qquad
 N=R/(x-y).
\]
Let $\mathfrak m=(x,y)R$ be the maximal ideal of $R$.  Then
\[
 \mathfrak m^2M=0,\qquad
 \mathfrak m^2N=0,\qquad
 \Tor_i^R(M,N)=0\quad\text{for every }i\geq1,
\]
whereas $\mathfrak m^3\neq0$.
\end{theorem}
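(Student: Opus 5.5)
First I will pin down the structure of $R$. In $R$ we have $xy=0$ and $x^2=y^3$. Hence $x^3=xy^3=0$, and $y^4=x^2y=0$. So $R$ has $K$-basis $1,x,y,y^2,y^3$ (with $x^2=y^3$), $\dim_K R=5$, and $R$ is local with maximal ideal $\mathfrak m=(x,y)$. One checks $\mathfrak m^2=(y^2,y^3)$ and $\mathfrak m^3=(y^3)\neq 0$ because $y^3=x^2$ is a basis vector. To justify this basis rigorously, I would note that a monomial order making $y^3$ the leading term of $x^2-y^3$ turns $\{xy,\,y^3-x^2\}$ into a Gröbner basis. The only S-pair to check is $y^2\cdot xy - x(y^3-x^2) = x^3$, and $x^3=x\cdot x^2$ reduces to $xy^3=y^2\cdot xy\equiv 0$. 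Next, $M=R/(y)\cong K[x]/(x^2)$, since setting $y=0$ forces $x^2=0$; so $\mathfrak m^2M=0$. Similarly, $N=R/(x-y)$: substituting $x=y$ gives $y^2=0$ and $y^2=y^3$, so $N\cong K[y]/(y^2)$ and $\mathfrak m^2N=0$.

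The main step is to compute a periodic free resolution of $M=R/(y)$. I will compute annihilators in the basis above. $\operatorname{ann}(y)$ consists of elements $a+bx+cy+dy^2+ey^3$ with $ay+cy^2+dy^3=0$, so $\operatorname{ann}(y)=(x,y^3)=(x)$, since $y^3=x^2$. Then $\operatorname{ann}(x)$: we have $x\cdot(a+bx+cy+dy^2+ey^3)=ax+bx^2$, so $\operatorname{ann}(x)=(y)$. Thus the complex
\[
\cdots\xrightarrow{x}R\xrightarrow{y}R\xrightarrow{x}R\xrightarrow{y}R\to M\to0
\]
is a minimal free resolution, with period $2$.

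Tensoring with $N$ gives the complex $\cdots\to N\xrightarrow{x}N\xrightarrow{y}N\xrightarrow{x}N\xrightarrow{y}N$. In $N$, $x$ acts as $y$, so every map is multiplication by $y$ on $K[y]/(y^2)$. The kernel of this map is $(y)$ and its image is $(y)$, so the complex is exact in every positive degree. Hence $\Tor_i^R(M,N)=0$ for all $i\ge1$, whereas $\Tor_0=M\otimes N=R/(x-y,y)=K\neq0$.

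I expect the main obstacle to be the careful verification of the $K$-basis and the two annihilator computations, because everything else reduces to them. The Gröbner check above handles the basis. The annihilators then follow by direct multiplication on the five basis elements, using the key identity $x^2=y^3$ to see that $(x,y^3)=(x)$.
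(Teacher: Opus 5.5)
Your overall route is exactly the paper's: the $K$-basis $1,x,y,y^2,y^3$, the annihilators $(0:_Ry)=(x)$ and $(0:_Rx)=(y)$, the $2$-periodic resolution $\cdots\xrightarrow{x}R\xrightarrow{y}R\to M\to0$, and tensoring with $N\cong K[y]/(y^2)$. Those later steps are correct.

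The problem is the step you yourself call the crux: the Gr\"obner-basis justification of the basis is wrong. If $y^3$ is the leading term of $x^2-y^3$, the leading monomials are $xy$ and $y^3$. The S-polynomial $y^2\cdot xy-x(y^3-x^2)=x^3$ then has leading monomial $x^3$, which is divisible by neither $xy$ nor $y^3$. So it is already in normal form and does not reduce to $0$. Your ``reduction'' $x^3=x\cdot x^2\mapsto xy^3$ rewrites $x^2$ as $y^3$. That uses the relation in the direction that raises the monomial in your order, so it is not a division step. The identity $x^3=x(x^2-y^3)+y^2\cdot xy$ is just the definition of the S-polynomial rewritten, with no term below $\operatorname{lcm}(xy,y^3)=xy^3$. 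Indeed, you showed $x^3\in I$, yet $x^3\notin(xy,y^3)$, so $\{xy,\,y^3-x^2\}$ is not a Gr\"obner basis for any such order.

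As written, nothing proves that $1,x,y,y^2,y^3$ are linearly independent. That independence is exactly what gives $y^3\neq0$, i.e.\ $\mathfrak m^3\neq0$, the whole point of the counterexample. It is also what the annihilator computations use: independence of $y,y^2,y^3$ and of $x,y^3$.

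The repair is short.
\begin{itemize}
\item Keep $y^3$ leading and add $x^3$. Then $\{xy,\,y^3-x^2,\,x^3\}$ is a Gr\"obner basis: the new S-pairs give $x^2\cdot xy-y\cdot x^3=0$ and $x^3(y^3-x^2)-y^3\cdot x^3=-x^5$, which reduces to $0$ modulo $x^3$. The standard monomials are $1,x,x^2,y,y^2$, and since $x^2=y^3$ in $R$ this is your basis.
\item Alternatively, use lex with $x>y$. The Gr\"obner basis is then $\{xy,\,x^2-y^3,\,y^4\}$, where $y^4$ is the S-polynomial $x\cdot xy-y(x^2-y^3)$. The standard monomials are exactly $1,x,y,y^2,y^3$.
\end{itemize}
The second version is essentially what the paper does without Gr\"obner language. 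It divides by $X^2-Y^3$, which is monic in $X$, to get unique normal forms $f(\widetilde y)+\widetilde x g(\widetilde y)$ in $K[X,Y]/(X^2-Y^3)$. It then shows $(\widetilde x\widetilde y)=\widetilde y^4K[\widetilde y]\oplus\widetilde x\widetilde yK[\widetilde y]$, and the $\widetilde y^4$ there is precisely the lex S-polynomial.
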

\begin{proof}
Put
\[
 P=K[X,Y],\qquad I=(XY,X^2-Y^3)\subseteq P,
\]
and let
\[
 R=P/I.
\]
We denote the residue classes of $X$ and $Y$ in $R$ by $x$ and $y$,
respectively.  Thus the defining relations in $R$ are
\begin{equation}\label{eq:defining-relations}
 xy=0,
 \qquad
 x^2=y^3.
\end{equation}
We shall verify all assertions in the theorem directly.

\medskip
\noindent
\textbf{Step 1: the vector-space structure of $R$, its maximal ideal,
and its Loewy length.}

We first determine a $K$-basis of $R$.
Consider the intermediate quotient
\[
 S=P/(X^2-Y^3),
\]
and write $\widetilde x$ and $\widetilde y$ for the images of $X$ and
$Y$ in $S$.  Regard $P=K[Y][X]$ as a polynomial ring in the one
variable $X$ with coefficients in $K[Y]$.  The polynomial
$X^2-Y^3$ is monic as a polynomial in $X$.  Hence the ordinary
one-variable division algorithm shows that every polynomial
$p\in P$ can be written uniquely in the form
\[
 p=q(X,Y)(X^2-Y^3)+f(Y)+Xg(Y),
 \qquad f(Y),g(Y)\in K[Y].
\]
Consequently, every element of $S$ has a unique expression
\begin{equation}\label{eq:S-normal-form}
 f(\widetilde y)+\widetilde x g(\widetilde y),
 \qquad f,g\in K[Y].
\end{equation}

The ring $R$ is obtained from $S$ by imposing the additional relation
$\widetilde x\widetilde y=0$.  We now describe the ideal generated by
$\widetilde x\widetilde y$ in $S$.  Using the unique expression in
\eqref{eq:S-normal-form}, an arbitrary multiple of
$\widetilde x\widetilde y$ has the form
\begin{align*}
 \bigl(f(\widetilde y)+\widetilde xg(\widetilde y)\bigr)
 \widetilde x\widetilde y
 &=\widetilde x\widetilde y f(\widetilde y)
   +\widetilde x^2\widetilde y g(\widetilde y)\\
 &=\widetilde x\widetilde y f(\widetilde y)
   +\widetilde y^4g(\widetilde y),
\end{align*}
because $\widetilde x^2=\widetilde y^3$ in $S$.  Conversely, every
sum of a multiple of $\widetilde y^4$ and a multiple of
$\widetilde x\widetilde y$ occurs in this way.  The uniqueness in
\eqref{eq:S-normal-form} also
shows that the two types of summands cannot cancel each other.  Thus,
as a $K$-vector space,
\[
 (\widetilde x\widetilde y)S
 =\widetilde y^4K[\widetilde y]
  \mathbin{\oplus}
  \widetilde x\widetilde yK[\widetilde y].
\]
It follows that every element of
$R=S/(\widetilde x\widetilde y)$ has a unique representative of the
form
\[
 a+bx+cy+dy^2+ey^3,
 \qquad a,b,c,d,e\in K.
\]
Therefore
\begin{equation}\label{eq:R-basis}
 1,\quad x,\quad y,\quad y^2,\quad y^3
\end{equation}
is a $K$-basis of $R$.  In particular,
\[
 \dim_K R=5
 \qquad\text{and}\qquad
 y^3\neq 0.
\]

We next record some immediate consequences of the defining relations.
Multiplying $x^2=y^3$ by $y$ gives
\[
 x^2y=y^4.
\]
The left-hand side is $x(xy)=0$.  Hence
\begin{equation}\label{eq:y4zero}
 y^4=0.
\end{equation}
Also,
\[
 x^3=x\cdot x^2=xy^3=y^2(xy)=0.
\]
More generally, every monomial containing both a positive power of
$x$ and a positive power of $y$ is zero, because it contains the
factor $xy$.

Let
\[
 \mathfrak m=(x,y)R.
\]
By the basis in \eqref{eq:R-basis}, the ideal $\mathfrak m$ is the
$K$-vector space
\begin{equation}\label{eq:m-basis}
 \mathfrak m=Kx\oplus Ky\oplus Ky^2\oplus Ky^3.
\end{equation}

The ideal $\mathfrak m^2$ is generated by the pairwise products of
$x$ and $y$.  Therefore, using \eqref{eq:defining-relations},
\[
 \mathfrak m^2=(x^2,xy,y^2)=(y^3,0,y^2)=(y^2,y^3).
\]
We now identify this ideal as a vector space.  If
$r=a+bx+cy+dy^2+ey^3$, then
\[
 ry^2=ay^2+cy^3
 \qquad\text{and}\qquad
 ry^3=ay^3,
\]
because $xy=0$ and $y^4=0$.  Thus
\begin{equation}\label{eq:m2-basis}
 \mathfrak m^2=Ky^2\oplus Ky^3.
\end{equation}
Multiplying once more by $\mathfrak m$, we use
\[
 xy^2=xy^3=0,
 \qquad
 y\cdot y^2=y^3,
 \qquad
 y\cdot y^3=y^4=0,
\]
to obtain
\begin{equation}\label{eq:m3-basis}
 \mathfrak m^3=Ky^3.
\end{equation}
Since $y^3\neq0$, we have $\mathfrak m^3\neq0$.  Finally,
\[
 \mathfrak m^4=\mathfrak m\mathfrak m^3=0,
\]
because both $xy^3$ and $y^4$ are zero.  Thus $\mathfrak m$ has nilpotency index four.

Since $\dim_K R=5$, the ring $R$ is Artinian.  To prove that it is local,
evaluate at $X=Y=0$.  This induces a surjective ring homomorphism
\[
 \pi:R\longrightarrow K,
 \qquad
 \pi(a+bx+cy+dy^2+ey^3)=a.
\]
Its kernel is exactly $\mathfrak m$, by \eqref{eq:m-basis}.  Hence
\[
 R/\mathfrak m\cong K,
\]
so $\mathfrak m$ is maximal.  Since $\mathfrak m^4=0$, every maximal ideal contains $\mathfrak m$; therefore $\mathfrak m$ is the unique maximal ideal of $R$.  Consequently, $R$ is local and has Loewy length four.

\medskip
\noindent
\textbf{Step 2: the modules $M$ and $N$, and the action of
$\mathfrak m^2$.}

The modules $M=R/(y)$ and $N=R/(x-y)$ are cyclic quotient modules, so
they are finitely generated. They are also quotients of $R$ by an ideal and thus $K$-algebras. We now describe them explicitly via their $K$-algebra and $R$-module structure and in particular show that they are 2-dimensional as a vector space over $K$. As they have simple top, they are  indecomposable.

In $M=R/(y)$, the element $y$ acts as zero, and the relation
$x^2=y^3$ therefore becomes $x^2=0$.  The assignment
\[
 t\longmapsto x+(y)
\]
defines a surjective $K$-algebra homomorphism
\[
 K[t]/(t^2)\longrightarrow M.
\]
Conversely, the assignments $x\mapsto t$ and $y\mapsto0$ define a
homomorphism $R\to K[t]/(t^2)$: indeed, both $xy$ and
$x^2-y^3$ are sent to zero.  This homomorphism kills the ideal $(y)$,
so it factors through $M$.  The two induced maps are inverse to one
another because they agree with the identity on the generators.
Consequently,
\begin{equation}\label{eq:M-dual-numbers}
 M\cong K[t]/(t^2).
\end{equation}
In particular, $M$ is nonzero and has $K$-basis $1,t$.

For $N=R/(x-y)$, let
\[
 u=x+(x-y)=y+(x-y)
\]
be the common image of $x$ and $y$.  The relation $xy=0$ gives
$u^2=0$.  Therefore the assignment $u\mapsto x+(x-y)$ gives a
surjective homomorphism
\[
 K[u]/(u^2)\longrightarrow N.
\]
Conversely, sending both $x$ and $y$ to $u$ defines a homomorphism
$R\to K[u]/(u^2)$, because
\[
 xy\longmapsto u^2=0
 \qquad\text{and}\qquad
 x^2-y^3\longmapsto u^2-u^3=0.
\]
It also sends $x-y$ to zero, so it factors through $N$.  Again the two
induced maps are inverse on the generators.  Hence
\begin{equation}\label{eq:N-dual-numbers}
 N\cong K[u]/(u^2).
\end{equation}
Thus $N$ is nonzero and has $K$-basis $1,u$.

By \eqref{eq:m2-basis}, the ideal $\mathfrak m^2$ is spanned by $y^2$
and $y^3$.  On $M$, the element $y$ acts as zero, so both $y^2$ and
$y^3$ act as zero.  On $N$, the element $y$ acts as $u$, and
$u^2=0$, so again both $y^2$ and $y^3$ act as zero.  Therefore
\begin{equation}\label{eq:m2-kills-modules}
 \mathfrak m^2M=0
 \qquad\text{and}\qquad
 \mathfrak m^2N=0.
\end{equation}
Moreover, $\mathfrak mM\neq0$ because $x\cdot1=t\neq0$ in $M$, and
$\mathfrak mN\neq0$ because $x\cdot1=u\neq0$ in $N$.  Thus both
modules have Loewy length exactly two.

\medskip
\noindent
\textbf{Step 3: the annihilators of $x$ and $y$.}

Every element $r\in R$ has a unique expression
\[
 r=a+bx+cy+dy^2+ey^3,
 \qquad a,b,c,d,e\in K.
\]
Multiplication by $y$ gives
\begin{align*}
 ry
 &=ay+bxy+cy^2+dy^3+ey^4\\
 &=ay+cy^2+dy^3.
\end{align*}
The elements $y,y^2,y^3$ are linearly independent by
\eqref{eq:R-basis}.  Hence $ry=0$ if and only if
$a=c=d=0$.  Therefore
\[
 (0:_R y)=Kx\oplus Ky^3.
\]
On the other hand, if
$s=\alpha+\beta x+\gamma y+\delta y^2+\varepsilon y^3$, then
\[
 sx=\alpha x+\beta y^3.
\]
Therefore every element of $Kx\oplus Ky^3$ occurs in this way.  Thus
\[
 xR=Kx\oplus Ky^3=(0:_R y).
\]

Similarly,
\begin{align*}
 rx
 &=ax+bx^2+cyx+dy^2x+ey^3x\\
 &=ax+by^3.
\end{align*}
The elements $x$ and $y^3$ are linearly independent, so $rx=0$ if and
only if $a=b=0$.  It follows that
\[
 (0:_R x)=Ky\oplus Ky^2\oplus Ky^3.
\]
But multiplication of a general element of $R$ by $y$ shows that
\[
 yR=Ky\oplus Ky^2\oplus Ky^3.
\]
Consequently,
\begin{equation}\label{eq:annihilators}
 (0:_R y)=xR=(x),
 \qquad
 (0:_R x)=yR=(y).
\end{equation}
Together with $xy=0$, these identities say that $x$ and $y$ form an
exact pair of zero divisors.

\medskip
\noindent
\textbf{Step 4: an explicit two-periodic free resolution of $M$.}

Let $F_i=R$ for every $i\geq0$, and let
\[
 \varepsilon:F_0=R\longrightarrow M=R/(y)
\]
be the quotient map.  For $i\geq1$, define an $R$-linear map
$d_i:F_i\to F_{i-1}$ by
\[
 d_i(r)=
 \begin{cases}
  ry,&\text{if $i$ is odd},\\
  rx,&\text{if $i$ is even}.
 \end{cases}
\]
Because $xy=0$, the composite of any two consecutive maps is zero.
Thus these maps form the augmented complex
\begin{equation}\label{eq:resolution}
 \cdots\xrightarrow{\,x\,}R
 \xrightarrow{\,y\,}R
 \xrightarrow{\,x\,}R
 \xrightarrow{\,y\,}R
 \xrightarrow{\,\varepsilon\,}M
 \longrightarrow0.
\end{equation}
We verify exactness at every term.

At $F_0$, the kernel of $\varepsilon$ is the ideal $(y)=yR$, while the
image of $d_1$ is also $yR$.  Hence the complex is exact at $F_0$.
Now let $i\geq1$.
If $i$ is odd, then $d_i$ is multiplication by $y$, so
\[
 \ker d_i=(0:_R y)=xR
\]
by \eqref{eq:annihilators}.  Since $i+1$ is even, $d_{i+1}$ is
multiplication by $x$, and therefore
\[
 \operatorname{im}d_{i+1}=xR=\ker d_i.
\]
If $i$ is even, then $d_i$ is multiplication by $x$, so
\[
 \ker d_i=(0:_R x)=yR.
\]
Now $i+1$ is odd, and $d_{i+1}$ is multiplication by $y$, whence
\[
 \operatorname{im}d_{i+1}=yR=\ker d_i.
\]
Thus \eqref{eq:resolution} is exact in every degree.  Since every
$F_i$ is a free $R$-module of rank one, it is a free resolution of
$M$.

This resolution is minimal because every
differential is represented by the $1\times1$ matrix $(x)$ or $(y)$,
and both entries belong to the maximal ideal $\mathfrak m$.

\medskip
\noindent
\textbf{Step 5: tensoring with $N$ and computing Tor.}

By definition, if $F_\bullet\to M$ is a free resolution, then
\[
 \Tor_i^R(M,N)=H_i(F_\bullet\otimes_R N).
\]
We therefore tensor the free resolution \eqref{eq:resolution} with
$N$.  For every $i$, the natural map
\[
 R\otimes_R N\longrightarrow N,
 \qquad r\otimes n\longmapsto rn,
\]
is an isomorphism.  Under this identification, tensoring a map given by
multiplication by $x$ or by $y$ produces the corresponding
multiplication map on $N$.  But in $N$ the elements $x$ and $y$ have
the same image $u$.  Hence the complex $F_\bullet\otimes_R N$ is
\begin{equation}\label{eq:tensored-complex}
 \cdots\xrightarrow{\,u\,}N
 \xrightarrow{\,u\,}N
 \xrightarrow{\,u\,}N.
\end{equation}

We calculate the kernel and image of multiplication by $u$ directly.
Every element of $N\cong K[u]/(u^2)$ has a unique expression
$a+bu$ with $a,b\in K$, and
\[
 u(a+bu)=au+bu^2=au.
\]
Therefore
\[
 u(a+bu)=0
 \quad\Longleftrightarrow\quad
 a=0
 \quad\Longleftrightarrow\quad
 a+bu\in Ku.
\]
Thus
\[
 \ker(u:N\to N)=Ku.
\]
Also, the displayed formula shows that every image is a scalar multiple
of $u$, and every scalar multiple of $u$ occurs as $u(a)$.  Hence
\[
 \operatorname{im}(u:N\to N)=Ku.
\]
The kernel of each differential in \eqref{eq:tensored-complex} is
therefore equal to the image of the next differential.  The complex is
exact in every positive homological degree, and consequently
\begin{equation}\label{eq:Tor-vanishing}
 \Tor_i^R(M,N)=0
 \qquad\text{for every }i\geq1.
\end{equation}

We have proved that $R$ is a commutative Artinian local ring with unique
maximal ideal $\mathfrak m=(x,y)$; that $M$ and $N$ are nonzero finitely
generated modules satisfying $\mathfrak m^2M=\mathfrak m^2N=0$; and
that all positive Tor groups $\Tor_i^R(M,N)$ vanish.  At the same time,
\eqref{eq:m3-basis} gives
\[
 \mathfrak m^3=Ky^3\neq0.
\]
\end{proof}

\section{A negative answer to a question of Avramov, Buchweitz and \c{S}ega}
In this section we prove the following, which gives a negative answer to the question of Avramov, Buchweitz and \c{S}ega as explained in the introduction.

\begin{theorem}
There exists a finite-dimensional commutative local non-Gorenstein $\mathbb{Q}$-algebra $R$ such that $\Ext_R^1(D(R),R)=0$.
\end{theorem}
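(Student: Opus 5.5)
The proof will be by explicit construction and direct computation, following the pattern of the previous section but with computer assistance, since the introduction indicates that the example is large (18 generators). I will exhibit a graded local $\mathbb{Q}$-algebra $R=\mathbb{Q}[x_1,\dots,x_n]/I$ with $\mathfrak m$-primary homogeneous ideal $I$. The verification then splits into three parts.

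First, I check that $R$ is finite-dimensional and local. A Gröbner basis computation shows that $I$ contains a power of the maximal ideal $(x_1,\dots,x_n)$, so $R$ is Artinian with residue field $\mathbb{Q}$. Second, I show that $R$ is not Gorenstein. Since $R$ is Artinian local, it suffices to show that the socle $(0:_R\mathfrak m)$ has dimension at least $2$. Equivalently, $D(R)$ is not cyclic, i.e.\ $\dim_{\mathbb Q} D(R)/\mathfrak m D(R)\ge 2$. This is a linear-algebra computation on the graded pieces of $R$; in a graded example, exhibiting socle elements in two different degrees already suffices.

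Third, and this is the substance, I show that $\Ext^1_R(D(R),R)=0$. The plan is as follows.
\begin{enumerate}
\item Present $D(R)$ as the module $\omega=\Hom_{\mathbb Q}(R,\mathbb Q)$. Its generators are dual to a basis of the socle, so $D(R)$ has $t$ generators, where $t$ is the type of $R$.
\item Compute the beginning of a minimal free resolution $F_2\xrightarrow{\partial_2}F_1\xrightarrow{\partial_1}F_0\to D(R)\to0$.
\item Apply $\Hom_R(-,R)$ and compute the homology of $F_0^*\to F_1^*\to F_2^*$ at $F_1^*$ as the quotient of $\dim$ of $\ker\partial_2^*$ by $\dim$ of $\operatorname{im}\partial_1^*$.
\end{enumerate}
These are finite matrices over $\mathbb Q$, so the computation reduces to rank computations, which Magma performs; I would supply the code. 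As an alternative check, I would use the identity $\Ext^1_R(D(R),R)\cong \Ext^1_R(D(R),D(D(R)))\cong \Ext^1_R(R,\ldots)$-type dualities, or the formula $\Ext^1_R(D(R),R)\cong D\Tor_1^R(D(R),D(R))$, valid for finite-dimensional algebras. This reduces the problem to showing $\Tor_1^R(\omega,\omega)=0$, i.e.\ that the multiplication map $\omega\otimes_R\Omega\omega\to\omega\otimes F_0$ is injective, which is a convenient independent sanity check.

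The main obstacle is not the verification but finding the example. Known results (Asashiba--Hoshino) force such an $R$ to have Loewy length at least $4$, and the vanishing of even a single Ext is a delicate condition on the module structure of $\omega$. I would search among graded algebras with small socle type (type $2$) that are ``almost Gorenstein'': for instance, quotients or fibre products of Gorenstein algebras where $\omega$ has a long linear resolution start, so that $\partial_1^*$ is as surjective as possible onto $\ker\partial_2^*$. Once a candidate has been found by random search, the proof itself is the finite computation described above.
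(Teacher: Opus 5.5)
\textbf{Gap: the example itself is missing.} The theorem is a pure existence claim, and its whole content is the example. Your proposal never exhibits one. Steps one to three are conditional on a ring $R=\mathbb{Q}[x_1,\dots,x_n]/I$ that is never written down. The last paragraph then replaces the ring by a search heuristic (type $2$, Loewy length $\geq 4$, ``almost Gorenstein'' quotients or fibre products), with no argument that such a search succeeds. So nothing in the proposal shows that a non-Gorenstein Artinian local ring with $\Ext^1_R(D(R),R)=0$ exists, which is exactly what is asserted and what was open as the question of Avramov--Buchweitz--\c{S}ega. The paper's actual contribution is the concrete algebra: $R=\mathbb{Q}[x_1,y_1,\dots,x_9,y_9]/I$ with $144$ explicit homogeneous quadratic relations, of dimension $48$, Hilbert function $(1,18,27,2)$, and two-dimensional socle. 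Your heuristics happen to match this example (type $2$, Loewy length $4$). But until a specific $I$ is written down and the finite computation is run on it, you have a research plan rather than a proof.

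\textbf{What is sound.} Your verification protocol is essentially the paper's. The paper checks locality and the Loewy layers via the radical series, non-Gorensteinness via the socle dimension, and $\Ext^1_R(D(R),R)=0$ via the projective cover $0\to\Omega\to P_0\to D(R)\to 0$ and
\[
\dim\Ext^1_R(D(R),R)=\dim\Hom_R(\Omega,R)-\dim\Hom_R(P_0,R)+\dim\Hom_R(D(R),R).
\]
This is the same rank computation as your homology of $F_0^*\to F_1^*\to F_2^*$. Note that it is a difference of dimensions, not a quotient. Your reformulation $\Ext^1_R(D(R),R)\cong D\Tor_1^R(D(R),D(R))$ is correct and would be a valid independent check. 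The chain ending in ``$\Ext^1_R(R,\ldots)$'' should be dropped, since $\Ext^1_R(R,-)=0$.
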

The algebra $R$ is given explicitly as follows:

\[
R=\mathbb{Q}[x_1,y_1,\ldots,x_9,y_9]/I,
\]
where
{\small
\allowdisplaybreaks[4]
\begin{align*}
I=\bigl(& x_1x_2,\; y_1x_2-x_1y_2,\; x_2y_2,\; y_2^2,\; x_1x_3,\; x_3^2,\; y_1x_3-x_1y_3,\; y_2x_3-x_2y_3,\\[-0.2ex]
& y_2y_3,\; x_1x_4,\; x_1y_1+x_3y_3-y_2x_4,\; y_3x_4,\; y_1x_4-x_1y_4,\; x_1y_1+x_3y_3-x_2y_4,\\[-0.2ex]
& x_3y_4,\; y_3y_4,\; y_2x_3-x_4y_4,\; x_1^2-x_2x_4+y_2x_5,\; y_1y_4-x_3x_5,\; x_3x_4-y_3x_5,\\[-0.2ex]
& y_1y_2-x_4x_5,\; x_2x_3-x_4^2-y_4x_5,\; y_3^2-x_5^2,\; y_1x_5-x_1y_5,\\[-0.2ex]
& x_1^2-y_1y_3+x_3y_3+x_2x_5-y_1y_5,\; x_1^2-x_2x_4+x_2y_5,\; y_4^2-y_2y_5,\; x_3x_4-x_3y_5,\\[-0.2ex]
& y_3y_5,\; x_2x_3-x_4^2-x_4y_5,\; y_4y_5,\; y_5^2,\; x_2^2+x_4^2-y_1x_6,\; y_2x_3+y_4^2-x_2x_6,\\[-0.2ex]
& y_1^2+y_3^2-y_2y_4-x_1x_6+y_2x_6,\; x_3x_6,\; y_4^2-y_3x_6,\; y_4x_6,\; x_3y_3+x_3x_4-x_5x_6,\\[-0.2ex]
& y_3^2-x_4^2+x_1x_5-x_4x_6-y_5x_6,\; x_6^2,\; x_2^2+x_4^2-x_1y_6,\; y_2x_3-y_1y_6,\\[-0.2ex]
& y_1^2+y_3^2-y_2y_4-x_1x_6+x_2y_6,\; y_4^2-x_3y_6,\; y_1^2+y_3^2-y_2y_4-x_1x_6+y_3y_6,\; x_4y_6,\\[-0.2ex]
& y_4y_6,\; y_3^2-x_4^2+x_1x_5-x_4x_6-x_5y_6,\; y_2x_3-y_1x_5-x_5y_5+y_5y_6,\\[-0.2ex]
& y_1^2+y_3^2-y_2y_4-x_6y_6,\; x_4^2-y_6^2,\; x_3y_3-x_1x_7,\; y_3^2-y_1x_7,\; y_1x_2-x_2x_7,\\[-0.2ex]
& y_1y_2-y_2x_7,\; y_1^2+y_3^2-y_2y_4-x_1x_6+x_3x_7,\; x_2^2-y_3x_7,\; y_1y_4+x_4x_6-y_4x_7,\\[-0.2ex]
& x_5x_7,\; y_5x_7,\; x_6x_7,\; x_2^2-y_2y_6+y_6x_7,\; x_7^2,\; y_3^2-x_1y_7,\\[-0.2ex]
& y_1x_3+x_5y_5-y_1y_7,\; y_1y_2-x_2y_7,\; x_2x_3-x_4^2-y_2y_7,\; x_2^2-x_3y_7,\; x_3y_3-y_3y_7,\\[-0.2ex]
& y_1y_4+x_4x_6-x_4y_7,\; x_3x_4-y_4y_7,\; x_5y_7,\; x_1^2-y_5y_7,\; x_2^2-y_2y_6+x_6y_7,\\[-0.2ex]
& y_1x_4-x_4x_7+y_6y_7,\; x_7y_7,\; y_1^2+y_3^2-y_2y_4-y_7^2,\; y_2x_3-y_1x_5+x_1x_8,\\[-0.2ex]
& x_1^2-y_1y_3+x_2x_5-y_1x_8,\; x_3x_4-x_4^2+x_1x_5-x_4x_6-x_2x_8,\; y_2x_3-y_1x_5+y_2x_8,\\[-0.2ex]
& x_1y_1-x_2^2+y_2y_6-x_3x_8,\; y_3^2+y_1x_4+x_4^2-y_2y_4-x_1x_5+x_4x_6-x_4x_7+y_3x_8,\; x_4x_8,\\[-0.2ex]
& y_4x_8,\; x_5x_8,\; y_5x_8,\; y_3^2+y_1x_4-x_6x_8,\; y_1x_3+y_1y_4+x_5y_5-y_6x_8,\\[-0.2ex]
& x_1^2-x_2^2-y_1y_3+x_7x_8,\; x_1^2-x_2x_4+y_7x_8,\; y_3^2-y_2y_4+x_8^2,\\[-0.2ex]
& x_1^2-y_1y_3+x_2x_5-x_1y_8,\; y_1y_8,\; y_2x_3-y_1x_5+x_2y_8,\\[-0.2ex]
& x_1^2+y_1^2-y_1y_3+y_3^2-y_2y_4+x_2x_5-y_2y_8,\\[-0.2ex]
& y_3^2+y_1x_4+x_4^2-y_2y_4-x_1x_5+x_4x_6-x_4x_7+x_3y_8,\; y_2x_3-y_1x_5-x_4x_6+y_3y_8,\\[-0.2ex]
& x_4y_8,\; y_1x_2-y_4y_8,\; x_5y_8,\; y_1x_4-y_5y_8,\; y_1x_3+y_1y_4+x_5y_5-x_6y_8,\\[-0.2ex]
& x_1^2-y_1y_3-x_3x_4+y_6y_8,\; x_1^2-x_2x_4+x_7y_8,\; y_7y_8,\; x_8y_8,\; y_8^2,\; y_2x_3-x_1x_9,\\[-0.2ex]
& y_1x_9,\; x_2x_9,\; y_2x_9,\; y_4^2-x_3x_9,\; y_1^2+y_3^2-y_2y_4-x_1x_6+y_3x_9,\\[-0.2ex]
& x_1^2+x_1y_1-y_1y_3+x_2x_5-x_4x_9,\; y_3^2-y_2y_4+y_4x_9,\; y_1x_4-x_5y_5-x_4x_7+x_5x_9,\\[-0.2ex]
& x_4x_6-y_5x_9,\; x_2x_3-x_4^2-x_6x_9,\; x_1^2-y_6x_9,\; y_1y_2-x_7x_9,\\[-0.2ex]
& x_2x_3-2x_4^2+x_1x_5-x_4x_6-y_7x_9,\; y_1x_3-x_8x_9,\; x_1^2-y_1y_3+y_8x_9,\; x_4^2-x_9^2,\\[-0.2ex]
& x_1y_9,\; y_1y_9,\; x_2y_9,\; x_4^2-x_1x_5+x_4x_6+y_2y_9,\; y_1^2+y_3^2-y_2y_4-x_1x_6+x_3y_9,\\[-0.2ex]
& x_2^2-y_3y_9,\; y_3^2-y_2y_4+x_4y_9,\; y_4y_9,\; x_4x_6-x_5y_9,\; y_5y_9,\; x_1^2-x_6y_9,\\[-0.2ex]
& x_1y_1+y_1x_4-y_6y_9,\; x_2x_3-2x_4^2+x_1x_5-x_4x_6-x_7y_9,\; y_2x_3-y_1x_5+y_7y_9,\\[-0.2ex]
& x_1^2-y_1y_3+x_8y_9,\; x_1^2-x_2x_4+y_8y_9,\; y_2x_3-x_9y_9,\; y_9^2\bigr).
\end{align*}
}
The proof is a computer verification in Magma.  We first explain the individual steps.  Since \texttt{BasicAlgebra} starts from a free associative algebra, the script adds all 153 commutator relations before adjoining the 144 quadratic relations displayed above.  It then computes the successive radicals of the regular module.  Their dimensions are
\[
 48,\ 47,\ 29,\ 2,\ 0,
\]
so the radical layers have dimensions $(1,18,27,2)$.  In particular, the top of the regular module is one-dimensional, and hence $R$ is local.  The socle of $R_R$ has dimension two.  Since a commutative Artinian local algebra is Gorenstein precisely when its socle is one-dimensional, this proves that $R$ is not Gorenstein.

Because $R$ is local, its unique indecomposable injective module is isomorphic to $D(R)$.  The command \texttt{InjectiveModule(R,1)} constructs this module.  If
\[
 0\longrightarrow\Omega\longrightarrow P_0\longrightarrow D(R)\longrightarrow0
\]
is its projective cover sequence, then applying $\Hom_R(-,R)$ gives
\[
 \dim_K\Ext_R^1(D(R),R)
 =\dim_K\Hom_R(\Omega,R)-\dim_K\Hom_R(P_0,R)
  +\dim_K\Hom_R(D(R),R).
\]
The final part of the script computes these three Hom spaces and evaluates this alternating sum.

The following compact Magma script carries out these steps and verifies the theorem:
\begin{MagmaCode}
K := Rationals();
F<x1,y1,x2,y2,x3,y3,x4,y4,x5,y5,x6,y6,x7,y7,x8,y8,x9,y9> := FreeAlgebra(K, 18);

z := [x1,y1,x2,y2,x3,y3,x4,y4,x5,y5,x6,y6,x7,y7,x8,y8,x9,y9];

rel := [];

// The 153 commutator relations.  Thus the quotient is commutative.
for i in [1..17] do
    for j in [i+1..18] do Append(~rel, z[j]*z[i] - z[i]*z[j]); end for;
end for;

// The 144 extra quadratic relations.
rel := rel cat [
    x1*x2, y1*x2-x1*y2, x2*y2, y2^2, x1*x3, x3^2, y1*x3-x1*y3, y2*x3-x2*y3, y2*y3, x1*x4,
    x1*y1+x3*y3-y2*x4, y3*x4, y1*x4-x1*y4, x1*y1+x3*y3-x2*y4, x3*y4, y3*y4, y2*x3-x4*y4,
    x1^2-x2*x4+y2*x5, y1*y4-x3*x5, x3*x4-y3*x5, y1*y2-x4*x5, x2*x3-x4^2-y4*x5, y3^2-x5^2,
    y1*x5-x1*y5, x1^2-y1*y3+x3*y3+x2*x5-y1*y5, x1^2-x2*x4+x2*y5, y4^2-y2*y5, x3*x4-x3*y5, y3*y5,
    x2*x3-x4^2-x4*y5, y4*y5, y5^2, x2^2+x4^2-y1*x6, y2*x3+y4^2-x2*x6,
    y1^2+y3^2-y2*y4-x1*x6+y2*x6, x3*x6, y4^2-y3*x6, y4*x6, x3*y3+x3*x4-x5*x6,
    y3^2-x4^2+x1*x5-x4*x6-y5*x6, x6^2, x2^2+x4^2-x1*y6, y2*x3-y1*y6,
    y1^2+y3^2-y2*y4-x1*x6+x2*y6, y4^2-x3*y6, y1^2+y3^2-y2*y4-x1*x6+y3*y6, x4*y6, y4*y6,
    y3^2-x4^2+x1*x5-x4*x6-x5*y6, y2*x3-y1*x5-x5*y5+y5*y6, y1^2+y3^2-y2*y4-x6*y6, x4^2-y6^2,
    x3*y3-x1*x7, y3^2-y1*x7, y1*x2-x2*x7, y1*y2-y2*x7, y1^2+y3^2-y2*y4-x1*x6+x3*x7, x2^2-y3*x7,
    y1*y4+x4*x6-y4*x7, x5*x7, y5*x7, x6*x7, x2^2-y2*y6+y6*x7, x7^2, y3^2-x1*y7,
    y1*x3+x5*y5-y1*y7, y1*y2-x2*y7, x2*x3-x4^2-y2*y7, x2^2-x3*y7, x3*y3-y3*y7,
    y1*y4+x4*x6-x4*y7, x3*x4-y4*y7, x5*y7, x1^2-y5*y7, x2^2-y2*y6+x6*y7, y1*x4-x4*x7+y6*y7,
    x7*y7, y1^2+y3^2-y2*y4-y7^2, y2*x3-y1*x5+x1*x8, x1^2-y1*y3+x2*x5-y1*x8,
    x3*x4-x4^2+x1*x5-x4*x6-x2*x8, y2*x3-y1*x5+y2*x8, x1*y1-x2^2+y2*y6-x3*x8,
    y3^2+y1*x4+x4^2-y2*y4-x1*x5+x4*x6-x4*x7+y3*x8, x4*x8, y4*x8, x5*x8, y5*x8, y3^2+y1*x4-x6*x8,
    y1*x3+y1*y4+x5*y5-y6*x8, x1^2-x2^2-y1*y3+x7*x8, x1^2-x2*x4+y7*x8, y3^2-y2*y4+x8^2,
    x1^2-y1*y3+x2*x5-x1*y8, y1*y8, y2*x3-y1*x5+x2*y8, x1^2+y1^2-y1*y3+y3^2-y2*y4+x2*x5-y2*y8,
    y3^2+y1*x4+x4^2-y2*y4-x1*x5+x4*x6-x4*x7+x3*y8, y2*x3-y1*x5-x4*x6+y3*y8, x4*y8, y1*x2-y4*y8,
    x5*y8, y1*x4-y5*y8, y1*x3+y1*y4+x5*y5-x6*y8, x1^2-y1*y3-x3*x4+y6*y8, x1^2-x2*x4+x7*y8,
    y7*y8, x8*y8, y8^2, y2*x3-x1*x9, y1*x9, x2*x9, y2*x9, y4^2-x3*x9,
    y1^2+y3^2-y2*y4-x1*x6+y3*x9, x1^2+x1*y1-y1*y3+x2*x5-x4*x9, y3^2-y2*y4+y4*x9,
    y1*x4-x5*y5-x4*x7+x5*x9, x4*x6-y5*x9, x2*x3-x4^2-x6*x9, x1^2-y6*x9, y1*y2-x7*x9,
    x2*x3-2*x4^2+x1*x5-x4*x6-y7*x9, y1*x3-x8*x9, x1^2-y1*y3+y8*x9, x4^2-x9^2, x1*y9, y1*y9,
    x2*y9, x4^2-x1*x5+x4*x6+y2*y9, y1^2+y3^2-y2*y4-x1*x6+x3*y9, x2^2-y3*y9, y3^2-y2*y4+x4*y9,
    y4*y9, x4*x6-x5*y9, y5*y9, x1^2-x6*y9, x1*y1+y1*x4-y6*y9, x2*x3-2*x4^2+x1*x5-x4*x6-x7*y9,
    y2*x3-y1*x5+y7*y9, x1^2-y1*y3+x8*y9, x1^2-x2*x4+y8*y9, y2*x3-x9*y9, y9^2
];

// BasicAlgebra(F,rel) constructs the quotient from the presentation.
// The radical computation below verifies that it is finite-dimensional and local.
R := BasicAlgebra(F, rel);

// The unique indecomposable projective is the regular right module R_R.
Rreg := RightRegularModule(R);

// The radical series has dimensions 48,47,29,2,0, so the Loewy
// length is 4 and the Hilbert function is [1,18,27,2]. In particular the algebra is local.
J1 := JacobsonRadical(Rreg);
J2 := JacobsonRadical(J1);
J3 := JacobsonRadical(J2);
J4 := JacobsonRadical(J3);
loewy_dimensions := [
    Dimension(Rreg), Dimension(J1), Dimension(J2),
    Dimension(J3), Dimension(J4)
];

// In a commutative Artinian local algebra, Gorenstein is equivalent to
// the socle being one-dimensional (equivalently, to self-injectivity).
socle_dimension := Dimension(Socle(Rreg));

// Since R is local, its unique indecomposable injective right module is
// D(R).  It has dimension 48 but is not projective.
DR := InjectiveModule(R, 1);

// Let 0 -> Omega -> P0 -> D(R) -> 0 be a projective cover.  Applying
// Hom_R(-,R) gives
//
//   0 -> Hom(DR,R) -> Hom(P0,R) -> Hom(Omega,R)
//     -> Ext^1(DR,R) -> 0.
//
// Hence the following alternating sum is dim Ext^1_R(D(R),R).
P0, cover := ProjectiveCover(DR);
Omega, omega_inclusion := Kernel(cover);

hom_Omega_R := Dimension(AHom(Omega, Rreg));
hom_P0_R    := Dimension(AHom(P0,    Rreg));
hom_DR_R    := Dimension(AHom(DR,    Rreg));

ext1_dimension := hom_Omega_R - hom_P0_R + hom_DR_R;

printf "number of defining relations = %o\n", #rel;
printf "dim_Q(R)                    = %o\n", Dimension(R);
printf "number of projectives       = %o\n", NumberOfProjectives(R);
printf "R commutative               = %o\n", IsCommutative(R);
printf "radical-series dimensions   = %o\n", loewy_dimensions;
printf "dim_Q Soc(R_R)              = %o\n", socle_dimension;
printf "R self-injective            = %o\n", IsSelfInjective(R);
printf "dim_Q D(R)                  = %o\n", Dimension(DR);
printf "dim_Q P0(D(R))              = %o\n", Dimension(P0);
printf "dim_Q Omega(D(R))           = %o\n", Dimension(Omega);
printf "dim_Q Hom(Omega(DR),R)      = %o\n", hom_Omega_R;
printf "dim_Q Hom(P0(DR),R)         = %o\n", hom_P0_R;
printf "dim_Q Hom(D(R),R)           = %o\n", hom_DR_R;
printf "dim_Q Ext^1_R(D(R),R)       = %o\n", ext1_dimension;
\end{MagmaCode}
The output shows that $R$ is a $48$-dimensional local algebra, that its socle has dimension two, and that $\Ext_R^1(D(R),R)=0$.  Thus $R$ has all the properties claimed in the theorem.  We also verified the computation independently using the GAP package QPA \cite{QPA}.  For this algebra one has $\Ext_R^2(D(R),R)\neq0$, so the example is not a counterexample to the first Tachikawa conjecture itself.

\section{A negative answer to a question of Dao}
The main theorem of this section is as follows:

\begin{theorem}\label{thm:commutative-ext12}
Let $K=\mathbb{F}_2$.  There is a finite-dimensional commutative local self-injective $K$-algebra $R$ admitting an indecomposable non-projective $R$-module $M$ such that
\[
 \Ext_R^1(M,M)=0=\Ext_R^2(M,M).
\]
\end{theorem}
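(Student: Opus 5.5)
The proof will be a concrete construction followed by a verification, in the same style as the previous section. I would exhibit $R$ as a quotient $K[z_1,\dots,z_n]/I$ with $K=\mathbb{F}_2$ and $I$ generated by explicit quadratic (and possibly cubic) relations, chosen so that $R$ is Artinian local with one-dimensional socle. A natural place to look is among graded Gorenstein algebras $R$ with Hilbert function symmetric, such as $(1,n,n,1)$, given as quotients by the annihilator of a cubic form under apolarity or by a Macaulay inverse system. Then the Gorenstein property holds by construction. The module $M$ would be given by an explicit presentation $R^a\xrightarrow{A}R^b\to M\to0$, with the entries of $A$ linear forms. In Magma, $M$ is constructed as a right module via its action matrices. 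The characteristic $2$ is presumably essential, since it allows symmetric or alternating linear-algebra coincidences that kill the relevant extensions.

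The verification steps, in order, would be the following.
\begin{enumerate}
\item Impose commutativity as in Section~2 and compute the radical series, which shows that $R$ is finite-dimensional and local.
\item Check \texttt{IsSelfInjective(R)}, or equivalently that $\dim_K\operatorname{Soc}(R)=1$; this gives the Gorenstein property.
\item Check that $M$ is indecomposable, for instance by verifying that $\End_R(M)$ is local, i.e.\ that $\End_R(M)/\operatorname{rad}\End_R(M)\cong K$.
\item Check that $M$ is not projective by comparing $\dim M$ with multiples of $\dim R$, or simply by computing that $\Omega(M)\neq0$ from the projective cover.
\end{enumerate}

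For the Ext groups, I would use the two projective cover sequences
\[
0\to\Omega M\to P_0\to M\to0,\qquad 0\to\Omega^2M\to P_1\to\Omega M\to0 .
\]
Applying $\Hom_R(-,M)$ to the first sequence gives
\[
\dim_K\Ext_R^1(M,M)=\dim\Hom(\Omega M,M)-\dim\Hom(P_0,M)+\dim\Hom(M,M).
\]
Dimension shifting gives $\Ext^2_R(M,M)\cong\Ext^1_R(\Omega M,M)$, so
\[
\dim_K\Ext_R^2(M,M)=\dim\Hom(\Omega^2M,M)-\dim\Hom(P_1,M)+\dim\Hom(\Omega M,M).
\]
All of these are \texttt{AHom} dimension computations, exactly as in the script of Section~2. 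The conclusion is that both alternating sums are zero. As a cross-check I would redo the computation in QPA.

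The main obstacle is finding $(R,M)$ in the first place, not verifying it. Over Gorenstein rings with $\mathfrak m^3=0$ such modules do not exist by the results of Asashiba and Hoshino, so $R$ must have Loewy length at least $4$. To search, I would fix a family of Gorenstein algebras $R$ and look for modules $M$ that are periodic up to twist, for example $\Omega^2M$ with the same dimension as $M$ but not isomorphic to it. For such $M$, $\Ext^1$ and $\Ext^2$ can be controlled through $\underline{\Hom}(\Omega^iM,M)$. Since $R$ is self-injective, $\Ext^i_R(M,M)\cong\underline{\Hom}_R(\Omega^iM,M)$, so the target condition is that every map $\Omega M\to M$ and every map $\Omega^2M\to M$ factors through a projective. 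This condition is linear in the candidate data, which makes a randomized search over $\mathbb{F}_2$ feasible.
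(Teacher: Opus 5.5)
Your verification scheme coincides with the paper's. You check the radical series and a one-dimensional socle for $R$, run an indecomposability check and a dimension count for $M$, and use the two alternating sums of Hom dimensions from $0\to\Omega M\to P_0\to M\to0$ and $0\to\Omega^2M\to P_1\to\Omega M\to0$. The gap is that the theorem is a pure existence statement whose entire content is the pair $(R,M)$, and you never produce one.

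Writing ``I would exhibit $R$ \dots\ and $M$ \dots'' and then sketching a search is not a proof. Nothing in your argument guarantees that the search succeeds. This is not a formality: the existence of such a pair was open for about twenty years, and a published argument, which the paper shows cannot be correct, would have excluded it.

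The paper supplies exactly what is missing. It takes $R=\mathbb{F}_2[x_1,\dots,x_{14}]/(x_ix_j+x_1\ell_{i,j}\mid 2\le i\le j\le14)$ for explicitly listed linear forms $\ell_{i,j}$; this $R$ is $30$-dimensional with Hilbert function $(1,14,14,1)$ and one-dimensional socle. It takes $M=\operatorname{coker}(d_1)$ for an explicit $2\times3$ matrix of linear forms, with $\dim_K M=31$ (hence not free, as $30\nmid31$) and radical layers $(2,25,4)$. The computed values are $\dim_K\End_R(M)=47$, $\dim_K\Hom_R(\Omega M,M)=15$, $\dim_K\Hom_R(\Omega^2M,M)=78$, $\dim_K\Hom_R(R^2,M)=62$ and $\dim_K\Hom_R(R^3,M)=93$. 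These give $\dim_K\Ext^1_R(M,M)=15-62+47=0$ and $\dim_K\Ext^2_R(M,M)=78-93+15=0$. Only once such explicit data are fixed does your procedure become a proof.

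Your heuristics for finding the example are also not reliable, though they carry no weight in the proof.
\begin{itemize}
\item Vanishing of $\underline{\Hom}_R(\Omega M,M)$ and $\underline{\Hom}_R(\Omega^2M,M)$ is a linear-algebra check for a \emph{fixed} candidate. It is not a linear condition on the entries of the presentation matrix, because the syzygies depend nonlinearly on them.
\item The paper's module is not of the ``periodic up to twist'' shape you suggest, since $\dim_K\Omega^2M=61\neq31$.
\item In characteristic $2$ the Gorenstein property does not come ``by construction'' from apolarity with ordinary differentiation. For $F=xyz+x^2w$, the classes of $w$ and $xyz$ are independent socle elements of $S/\operatorname{Ann}(F)$, because $w\circ F=x^2$ is killed by all partial derivatives. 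You have to use the contraction action on a divided power algebra instead.
\end{itemize}
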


We first give a compact presentation of the algebra.  Put
\[
 S=K[x_1,\ldots,x_{14}].
\]
For a nonempty increasing tuple of indices, use the abbreviation
\[
 x[i_1,\ldots,i_t]=x_{i_1}+\cdots+x_{i_t}.
\]
For \(2\leq i\leq14\), set
\[
 \boldsymbol{\ell}_i=(\ell_{i,i},\ell_{i,i+1},\ldots,\ell_{i,14}).
\]
The entries in each of the following tuples occur in increasing order of the second index.  For example, the first two entries of $\boldsymbol{\ell}_2$ are $\ell_{2,2}$ and $\ell_{2,3}$:
{\scriptsize
\allowdisplaybreaks[4]
\begin{align*}
\boldsymbol{\ell}_2={}&\bigl(x[4,5,6,7,8,9,10,12,13,14],\, x[5,7,9,10,14],\, x[3,8,9,11,12,13,14],\, x[3,4,6,10,12,14],\\[-0.25ex]
&\quad x[1,4,5,11,12,14],\, x[1,3,5,7,10,12],\, x[1,2,4,6,7,9,10,13],\, x[3,4,6,12,14],\, x[2,4,5,6,11,13,14],\\[-0.25ex]
&\quad x[1,2,3,8,11,12],\, x[1,2,3,8,10,11,13],\, x[4,6,8,11,13],\, x[1,4,5,6,10,11,13]\bigr),\\[0.35ex]
\boldsymbol{\ell}_3={}&\bigl(x[1,2,3,4,5,6,8,13],\, x[1,2,5,7,12],\, x[2,6,7,12],\, x[1,4,5,6,7,8,12],\, x[2,3,5,7,8,9,10,12,13],\\[-0.25ex]
&\quad x[2,4,5,8,9,12,13,14],\, x[3,6,11,12,13,14],\, x[1,3,4,8,9,13,14],\, x[1,4,6,8,9,12,13,14],\\[-0.25ex]
&\quad x[1,5,7,9,10,12,13,14],\, x[1,3,6,7,8,9,11,12,14],\, x[2,4,6,7,8,10,11,14]\bigr),\\[0.35ex]
\boldsymbol{\ell}_4={}&\bigl(x[1,2,8,9,10,12],\, x[2,4,5,6,7,10,14],\, x[2,3,4,5,6,8,9,10,11,12,14],\, x[1,2,5,7,9,12,14],\\[-0.25ex]
&\quad x[3,6,10,11],\, x[2,3,5,6,7,8,9,11,14],\, x[2,5,6,7,9,11,12],\, x[7,8,10,13,14],\, x[3,4,5,10,11],\\[-0.25ex]
&\quad x[2,3,11],\, x[1,4,5,6,8,10,11]\bigr),\\[0.35ex]
\boldsymbol{\ell}_5={}&\bigl(x[1,2,4,6,9,10,11,12,14],\, x[2,9,11,13,14],\, x[3,5,6,7,9,10],\, x[1,2,6,7,8,9,10,13,14],\\[-0.25ex]
&\quad x[1,2,3,4,6,7,11,12,13],\, x[1,3,4,6,7,8,9,10,11,12],\, x[2,4,9,10,11,12,14],\, x[2,4,9,11,13,14],\\[-0.25ex]
&\quad x[1,2,3,5,6,7,10,12,13],\, x[3,4,14]\bigr),\\[0.35ex]
\boldsymbol{\ell}_6={}&\bigl(x[1,2,4,5,6,9,12,13,14],\, x[3,7,9,12,14],\, x[1,3,8,9,11,12,14],\, x[4,6,8,10,12,13,14],\\[-0.25ex]
&\quad x[1,5,6,7,10,11,13,14],\, x[3,4,5,6,7,9,10,13,14],\, x[1,2,4,5,6,7,11],\, x[3,4,6,8,9,11,12,13,14],\\[-0.25ex]
&\quad x[4,6,8,11,12,13]\bigr),\\[0.35ex]
\boldsymbol{\ell}_7={}&\bigl(x[1,2,3,4,7],\, x[2,3,4,5,8,11,12,13,14],\, x[1,2,3,4,7,8,11,12,13],\, x[1,5,10,11,12],\\[-0.25ex]
&\quad x[1,2,3,4,5,6,12,13,14],\, x[1,3,4,5,6,7,10,11,13,14],\, x[1,3,4,6,7,11,12],\, x[1,2,4,7,10,12,13,14]\bigr),\\[0.35ex]
\boldsymbol{\ell}_8={}&\bigl(x[1,4,5,6,8,10,12,13],\, x[4,6,7,10,11,13,14],\, x[1,2,3,9,11,12,13],\, x[1,6,11,12,14],\, x[1,2,8,11,14],\\[-0.25ex]
&\quad x[3,9,11],\, x[2,4,5,6,7,8,10,11,12,13]\bigr),\\[0.35ex]
\boldsymbol{\ell}_9={}&\bigl(x[1,2,3,4,5,6,7,8,9,10,11,12],\, x[4,5,7,8,10,12,13],\, x[1,3,4,9,11,13],\, x[4,5,6,8,9,10,12,13,14],\\[-0.25ex]
&\quad x[1,2,3,6,7,8,9,10,13],\, x[2,7,10,11,13,14]\bigr),\\[0.35ex]
\boldsymbol{\ell}_{10}={}&\bigl(x[4,8,11],\, x[3,6,8,11,12,13,14],\, x[2,3,4,5,10,12],\, x[1,2,3,4,5,6,8,9,10,11],\, x[1,5,6,8,9,12,13]\bigr),\\[0.35ex]
\boldsymbol{\ell}_{11}={}&\bigl(x[1,3,4,6,7,8,10,12],\, x[1,2,4,6,7,8,10,11,14],\, x[1,2,3,4,8,9,10,11,12,13],\, x[5,6,8,12,14]\bigr),\\[0.35ex]
\boldsymbol{\ell}_{12}={}&\bigl(x[2,3,4,7,8,10,14],\, x[4,5,6,9,12,13,14],\, x[2,6,8,9,13,14]\bigr),\\[0.35ex]
\boldsymbol{\ell}_{13}={}&\bigl(x[4,5,9,10,11,12,13],\, x[1,2,4,5,7,9,13]\bigr),\\[0.35ex]
\boldsymbol{\ell}_{14}={}&\bigl(x[2,3,4,5,8,10,11]\bigr).
\end{align*}
}
Let
\begin{equation}\label{eq:section3-algebra}
 I=\bigl(x_i x_j+x_1\ell_{i,j}\mid 2\leq i\leq j\leq14\bigr)
 \qquad\text{and}\qquad
 R=S/I.
\end{equation}
Here the plus sign is the same as a minus sign because $K=\mathbb{F}_2$.  Thus every quadratic monomial involving only $x_2,\ldots,x_{14}$ is expressed as a linear combination of
\[
 x_1^2,x_1x_2,\ldots,x_1x_{14}.
\]
The computation below gives
\[
 \dim_K R=30
 \qquad\text{and}\qquad
 \bigl(\dim_K(R/\mathfrak m),\dim_K(\mathfrak m/\mathfrak m^2),
 \dim_K(\mathfrak m^2/\mathfrak m^3),\dim_K\mathfrak m^3\bigr)
 =(1,14,14,1),
\]
where $\mathfrak m=(x_1,\ldots,x_{14})R$.  In particular, $R$ is local with radical-series dimensions
\[
 30,\ 29,\ 15,\ 1,\ 0.
\]
Moreover, the socle of $R_R$ is one-dimensional.  Hence the commutative Artinian local algebra $R$ is Gorenstein and therefore self-injective. 

We now give the module by an explicit minimal projective presentation.  Let $f_1,f_2$ be the standard basis of $R^2$, and let $e_1,e_2,e_3$ be the standard basis of $R^3$.  Define
\[
 d_1:R^3\longrightarrow R^2,
 \qquad
 d_1(e_j)=a_j f_1+b_j f_2,
\]
where
\begin{align*}
 a_1&=x_1+x_3+x_5,
 &b_1&=x_1+x_3+x_5+x_8+x_{11}+x_{12},\\
 a_2&=x_2+x_4+x_5+x_6+x_7+x_9+x_{10}+x_{12},
 &b_2&=x_4+x_5+x_7+x_{12},\\
 a_3&=x_2+x_3+x_6+x_8+x_{11}+x_{14},
 &b_3&=x_2+x_6+x_8+x_{10}+x_{12}+x_{13}+x_{14}.
\end{align*}
Equivalently, $d_1$ is represented by the matrix
\begin{equation}\label{eq:section3-presentation-matrix}
 \left(\begin{array}{ccc}
 a_1&a_2&a_3\\
 b_1&b_2&b_3
 \end{array}\right).
\end{equation}
Set
\begin{equation}\label{eq:section3-module}
 M=\operatorname{coker}(d_1),
 \qquad
 R^3\xrightarrow{\ d_1\ }R^2\longrightarrow M\longrightarrow0.
\end{equation}
All entries of the presentation matrix lie in $\mathfrak m$, so the presentation is minimal.  Its image has dimension $29$, and consequently $\dim_K M=60-29=31$.  The radical-series dimensions of $M$ are
\[
 31,\ 29,\ 4,\ 0,
\]
so the radical layers have dimensions $(2,25,4)$.  Magma's decomposition routine returns a single summand, so $M$ is indecomposable.  The module is not projective: over the local algebra $R$, every finitely generated projective module is free, whereas $30$ does not divide $31$.

For completeness, we explain the numerical Ext calculation used in the script.  Put $\Omega_1=\Omega_R(M)$.  The map $R^2\to M$ in \eqref{eq:section3-module} is a projective cover, $\dim_K\Omega_1=29$, and $R^3\to\Omega_1$ is a projective cover with kernel $\Omega_2=\Omega_R^2(M)$ of dimension $61$.  Applying $\Hom_R(-,M)$ to the first two short exact sequences in the minimal projective resolution gives
\begin{align*}
 \dim_K\Ext_R^1(M,M)
 &=\dim_K\Hom_R(\Omega_1,M)-\dim_K\Hom_R(R^2,M)
   +\dim_K\End_R(M),\\
 \dim_K\Ext_R^2(M,M)
 &=\dim_K\Hom_R(\Omega_2,M)-\dim_K\Hom_R(R^3,M)
   +\dim_K\Hom_R(\Omega_1,M).
\end{align*}
The computed dimensions are
\[
 \dim_K\End_R(M)=47,\qquad
 \dim_K\Hom_R(\Omega_1,M)=15,\qquad
 \dim_K\Hom_R(\Omega_2,M)=78.
\]
Since $\dim_K\Hom_R(R^2,M)=62$ and $\dim_K\Hom_R(R^3,M)=93$, this yields
\[
 \dim_K\Ext_R^1(M,M)=15-62+47=0,
 \qquad
 \dim_K\Ext_R^2(M,M)=78-93+15=0.
\]

The following Magma script constructs the algebra and the module directly from the presentation above and verifies all assertions.
\begin{MagmaCode}
K := GF(2);
F<x1,x2,x3,x4,x5,x6,x7,x8,x9,x10,x11,x12,x13,x14>
    := FreeAlgebra(K,14);
z := [x1,x2,x3,x4,x5,x6,x7,x8,x9,x10,x11,x12,x13,x14];

rel := [];

// Impose commutativity.
for i in [1..13] do
    for j in [i+1..14] do
        Append(~rel,z[j]*z[i]-z[i]*z[j]);
    end for;
end for;

// For the pairs (i,j), 2 <= i <= j <= 14, the following supports
// encode ell_(i,j) = sum_{k in support} x_k.
supports := [
    [4,5,6,7,8,9,10,12,13,14], [5,7,9,10,14], [3,8,9,11,12,13,14], [3,4,6,10,12,14],
    [1,4,5,11,12,14], [1,3,5,7,10,12], [1,2,4,6,7,9,10,13], [3,4,6,12,14],
    [2,4,5,6,11,13,14], [1,2,3,8,11,12], [1,2,3,8,10,11,13], [4,6,8,11,13],
    [1,4,5,6,10,11,13], [1,2,3,4,5,6,8,13], [1,2,5,7,12], [2,6,7,12],
    [1,4,5,6,7,8,12], [2,3,5,7,8,9,10,12,13], [2,4,5,8,9,12,13,14], [3,6,11,12,13,14],
    [1,3,4,8,9,13,14], [1,4,6,8,9,12,13,14], [1,5,7,9,10,12,13,14], [1,3,6,7,8,9,11,12,14],
    [2,4,6,7,8,10,11,14], [1,2,8,9,10,12], [2,4,5,6,7,10,14], [2,3,4,5,6,8,9,10,11,12,14],
    [1,2,5,7,9,12,14], [3,6,10,11], [2,3,5,6,7,8,9,11,14], [2,5,6,7,9,11,12],
    [7,8,10,13,14], [3,4,5,10,11], [2,3,11], [1,4,5,6,8,10,11],
    [1,2,4,6,9,10,11,12,14], [2,9,11,13,14], [3,5,6,7,9,10], [1,2,6,7,8,9,10,13,14],
    [1,2,3,4,6,7,11,12,13], [1,3,4,6,7,8,9,10,11,12], [2,4,9,10,11,12,14], [2,4,9,11,13,14],
    [1,2,3,5,6,7,10,12,13], [3,4,14], [1,2,4,5,6,9,12,13,14], [3,7,9,12,14],
    [1,3,8,9,11,12,14], [4,6,8,10,12,13,14], [1,5,6,7,10,11,13,14], [3,4,5,6,7,9,10,13,14],
    [1,2,4,5,6,7,11], [3,4,6,8,9,11,12,13,14], [4,6,8,11,12,13], [1,2,3,4,7],
    [2,3,4,5,8,11,12,13,14], [1,2,3,4,7,8,11,12,13], [1,5,10,11,12], [1,2,3,4,5,6,12,13,14],
    [1,3,4,5,6,7,10,11,13,14], [1,3,4,6,7,11,12], [1,2,4,7,10,12,13,14], [1,4,5,6,8,10,12,13],
    [4,6,7,10,11,13,14], [1,2,3,9,11,12,13], [1,6,11,12,14], [1,2,8,11,14],
    [3,9,11], [2,4,5,6,7,8,10,11,12,13], [1,2,3,4,5,6,7,8,9,10,11,12], [4,5,7,8,10,12,13],
    [1,3,4,9,11,13], [4,5,6,8,9,10,12,13,14], [1,2,3,6,7,8,9,10,13], [2,7,10,11,13,14],
    [4,8,11], [3,6,8,11,12,13,14], [2,3,4,5,10,12], [1,2,3,4,5,6,8,9,10,11],
    [1,5,6,8,9,12,13], [1,3,4,6,7,8,10,12], [1,2,4,6,7,8,10,11,14], [1,2,3,4,8,9,10,11,12,13],
    [5,6,8,12,14], [2,3,4,7,8,10,14], [4,5,6,9,12,13,14], [2,6,8,9,13,14],
    [4,5,9,10,11,12,13], [1,2,4,5,7,9,13], [2,3,4,5,8,10,11]
];
// Magma does not permit the range for j in a sequence comprehension
// to depend on the previously introduced variable i.  Build the 91 pairs
// explicitly, in the same lexicographic order as the supports above.
pairs := [];
for i in [2..14] do
    for j in [i..14] do
        Append(~pairs,<i,j>);
    end for;
end for;
assert #supports eq 91 and #pairs eq 91;

for t in [1..#pairs] do
    i := pairs[t][1];
    j := pairs[t][2];
    ell := &+[z[k] : k in supports[t]];
    Append(~rel,z[i]*z[j]+z[1]*ell);
end for;

// There are 91 commutators and 91 additional quadratic relations.
assert #rel eq 182;
printf "number of defining relations  = %o\n", #rel;

R := BasicAlgebra(F,rel);
x := NonIdempotentGenerators(R);
lin := func<S | &+[x[i] : i in S]>;

// The regular module and the radical series of R.
Rreg := RightRegularModule(R);
J1 := JacobsonRadical(Rreg);
J2 := JacobsonRadical(J1);
J3 := JacobsonRadical(J2);
J4 := JacobsonRadical(J3);
radR := [Dimension(Rreg),Dimension(J1),Dimension(J2),
         Dimension(J3),Dimension(J4)];

// Construct R^2 and its two canonical generators.
// The objects inclusions[1] and inclusions[2] are maps, so their
// values must be obtained with Magma's map-application operator @.
P0, inclusions, projections := ProjectiveModule(R,[2]);
Pcopy1 := Domain(inclusions[1]);
Pcopy2 := Domain(inclusions[2]);
u1 := (Pcopy1.1) @ inclusions[1];
u2 := (Pcopy2.1) @ inclusions[2];

a1 := lin([1,3,5]);
b1 := lin([1,3,5,8,11,12]);
a2 := lin([2,4,5,6,7,9,10,12]);
b2 := lin([4,5,7,12]);
a3 := lin([2,3,6,8,11,14]);
b3 := lin([2,6,8,10,12,13,14]);

r1 := u1*a1+u2*b1;
r2 := u1*a2+u2*b2;
r3 := u1*a3+u2*b3;

// Form the presentation map d1 : R^3 -> R^2 whose columns are
// (a_j,b_j)^t, and define M as its cokernel.
d1 := LiftHomomorphism([r1,r2,r3],[3]);
M, quotient_map := Cokernel(d1);
assert Dimension(Domain(d1)) eq 90;
assert Dimension(Codomain(d1)) eq 60;
printf "dim_K(R^3)                   = %o\n", Dimension(Domain(d1));
printf "dim_K(R^2)                   = %o\n", Dimension(Codomain(d1));
printf "dim_K(coker(d1))             = %o\n", Dimension(M);

JM1 := JacobsonRadical(M);
JM2 := JacobsonRadical(JM1);
JM3 := JacobsonRadical(JM2);
radM := [Dimension(M),Dimension(JM1),Dimension(JM2),Dimension(JM3)];

summands := IndecomposableSummands(M);
is_indecomposable := #summands eq 1;
is_projective, projective_type := IsProjective(M);

// First two syzygies and the Hom-dimension formulas for Ext^1 and Ext^2.
P0c, cover0, inc0, proj0, type0 := ProjectiveCover(M);
Omega1, omega1_inclusion := Kernel(cover0);
P1c, cover1, inc1, proj1, type1 := ProjectiveCover(Omega1);
Omega2, omega2_inclusion := Kernel(cover1);

hMM  := Dimension(AHom(M,M));
hP0M := Dimension(AHom(P0c,M));
hO1M := Dimension(AHom(Omega1,M));
hP1M := Dimension(AHom(P1c,M));
hO2M := Dimension(AHom(Omega2,M));

ext1_dimension := hO1M-hP0M+hMM;
ext2_dimension := hO2M-hP1M+hO1M;

printf "dim_K(R)                    = %o\n", Dimension(R);
printf "number of projectives       = %o\n", NumberOfProjectives(R);
printf "R commutative               = %o\n", IsCommutative(R);
printf "R radical dimensions        = %o\n", radR;
printf "dim_K Soc(R_R)              = %o\n", Dimension(Socle(Rreg));
printf "R self-injective            = %o\n", IsSelfInjective(R);
printf "dim_K(M)                    = %o\n", Dimension(M);
printf "M radical dimensions        = %o\n", radM;
printf "dim_K Soc(M)                = %o\n", Dimension(Socle(M));
printf "M indecomposable            = %o\n", is_indecomposable;
printf "M projective                = %o\n", is_projective;
printf "type of P_0(M)              = %o\n", type0;
printf "type of P_0(Omega^1(M))     = %o\n", type1;
printf "dim_K Omega^1(M)            = %o\n", Dimension(Omega1);
printf "dim_K Omega^2(M)            = %o\n", Dimension(Omega2);
printf "dim_K End(M)                = %o\n", hMM;
printf "dim_K Hom(P_0(M),M)         = %o\n", hP0M;
printf "dim_K Hom(Omega^1(M),M)     = %o\n", hO1M;
printf "dim_K Hom(P_0(Omega^1),M)   = %o\n", hP1M;
printf "dim_K Hom(Omega^2(M),M)     = %o\n", hO2M;
printf "dim_K Ext^1_R(M,M)          = %o\n", ext1_dimension;
printf "dim_K Ext^2_R(M,M)          = %o\n", ext2_dimension;
\end{MagmaCode}
Thus $R$ and $M$ have all the properties claimed in Theorem~\ref{thm:commutative-ext12}. We remark that $\Ext_R^3(M,M) \neq 0$ and so this gives no counterexample to the second Tachikawa conjecture. We also verified the calculation independently with the GAP package QPA \cite{QPA}.

\section{Two conjectures}
We record two conjectures that would follow from Yamagata's conjecture and may be of particular interest to commutative algebraists, where Yamagata's conjecture might not be well known so far. Recall that the dominant dimension measures the length of the initial projective part of the minimal injective resolution of the regular module. Yamagata's conjecture predicts that, for finite-dimensional, not necessarily commutative, non-self-injective algebras, the dominant dimension is bounded by a function depending only on the number of simple modules. Via the standard Morita--Tachikawa construction, the relevant algebras where the dominant dimension is of interest are $\End_R(M)$ for a generator-cogenerator $M$ of $\mod R$ and the dominant dimensions of such endomorphism rings are determined by the corresponding initial Ext-vanishing ranges. Thus Yamagata's conjecture would imply uniform finite-vanishing versions of both Tachikawa conjectures in the commutative local case. For more precise formulations of Yamagata's conjecture, partial results and related applications, see \cite{Y,M,CFKKY}.
Yamagata's conjecture would imply the next two conjectures in this section that we want to popularize here.
\begin{conjecture}
There exists an integer $n\geq1$ such that, for every commutative local finite-dimensional algebra $R$, the vanishing
\[
 \Ext_R^i(D(R),R)=0\qquad\text{for }i=1,\ldots,n
\]
implies that $R$ is Gorenstein.
\end{conjecture}
The example in Section~2 shows that $n=1$ is impossible. It remains open whether $n=2$ suffices. This is a uniform finite-vanishing strengthening of the first Tachikawa conjecture.

The analogous strengthening of the second Tachikawa conjecture is the following.
\begin{conjecture}
There exists an integer $n\geq1$ such that, for every commutative local finite-dimensional Gorenstein algebra $R$ and every indecomposable $R$-module $M$, the vanishing
\[
 \Ext_R^i(M,M)=0\qquad\text{for }i=1,\ldots,n
\]
implies that $M$ is projective.
\end{conjecture}
The example in Section~3 shows that $n\leq2$ is impossible. It remains open whether $n=3$ suffices.

\section*{Statement on the use of AI}
Generative AI tools, namely ChatGPT, were used during the
exploratory and preparatory stages of this work. The mathematical strategy and proofs were conceived and directed by the authors.
ChatGPT assisted with selected aspects of the technical development, including
the formulation of auxiliary results and the elaboration of technical details,
as well as with literature searches, consistency checks, and \LaTeX{}
preparation. All AI-generated suggestions were independently reviewed and
verified by the authors, who assume full responsibility for the mathematical
arguments, results, and final content of the paper.

\section*{Acknowledgement}
We benefited from the use of Magma \cite{BCP} and the GAP package QPA \cite{QPA}.

\end{document}